\documentclass[11pt,a4paper]{article}
\usepackage[numbers,sort&compress]{natbib}
\usepackage[colorlinks,
            linkcolor=blue,
            anchorcolor=green,
            citecolor=magenta
            ]{hyperref}
\usepackage{mathrsfs,amssymb,amsfonts}

\usepackage{graphicx}
\usepackage{amsmath,amssymb,latexsym,color}
\usepackage[mathscr]{eucal}
\usepackage[numbers]{natbib}
\usepackage{bm}
\usepackage{subfigure}
\usepackage[top=1in, bottom=1in, left=1.25in, right=1.25in]{geometry}
\usepackage{indentfirst}
\usepackage{array}
\usepackage{graphicx}
\usepackage{setspace}
\usepackage{epstopdf}
\usepackage{cleveref}

\usepackage{float}

\newtheorem{theorem}{Theorem}[section]
\newtheorem{definition}[theorem]{Definition}

\newtheorem{lemma}[theorem]{Lemma}
\newtheorem{remark}[theorem]{Remark}

\newtheorem{example}{Example}

\newcommand{\qed}{\hfill\Box\medskip}
\usepackage{CJK}
\usepackage{geometry}
\begin{document}

\setlength{\baselineskip}{14pt}
\renewcommand{\abovewithdelims}[2]{
\genfrac{[}{]}{0pt}{}{#1}{#2}}

\title{\bf {The cyclic diagnosability of Cayley graphs generated by transposition triangle-free unicyclic graphs under the PMC and MM$^*$ models}\footnote{This research is supported by the National Natural Science Foundation of China(12301453, 12271524, 12331013), the Project of Education Department of Hunan Province(23B0125), the Project of Scientific Research Fund of Hunan Provincial Science and Technology Department (No. 2018WK4006).}}

\author{ Jialu Ding\quad Caixia Li\footnote{Corresponding author. \newline {\em E-mail address:} 2932892417@qq.com(J. Ding), caixial@yeah.net(C. Li), lips@xtu.edu.cn (P. Li), jinweipei82@163.co(W. Jin)}\quad Pingshan Li \quad Wei Jin \\
{\footnotesize  School of Mathematics and Computational Science, Xiangtan University, Xiangtan, Hunan 411105, PR China}\\
{\footnotesize Key Laboratory of Intelligent Computing$\And$Information Processing of  Ministry of Education}\\
{\footnotesize Key Laboratory for Computation and Simulation in Science and Engineering}\\
{\footnotesize National Center for Applied Mathematics in Hunan}\\
}

\date{}
 \maketitle
\begin{abstract}
\setlength{\parindent}{2em}
Diagnosability is a critical parameter for evaluating the reliability and self-diagnostic capacity of multiprocessor systems. As an advanced extension of traditional diagnosability, cyclic diagnosability has been proposed to enable more comprehensive assessment of the diagnostic capabilities of interconnection networks. In this paper, we prove that under both the PMC and MM$^*$ models, the cyclic diagnosability of Cayley graphs generated by triangle-free unicyclic graphs is $5n-10$ for $n \geq 11$.

\medskip
\noindent {\em Keywords:} Cayley graphs; Cyclic diagnosability; triangle-free unicyclic graphs; PMC model; MM$^*$ model

\medskip
\end{abstract}

\section{Introduction}\label{section1}
\setlength{\parindent}{2em}

In large-scale parallel computing environments, multiprocessor systems are widely applied in fields such as scientific computing, data analysis, and artificial intelligence due to their high performance and reliability requirements. As the system scale continues to expand, the probability of processor node failures increases significantly. Therefore, fault diagnosis capability has become one of the important indicators for evaluating the reliability of multiprocessor systems. Typically, a multiprocessor system can be abstracted as an undirected graph, where vertices represent processors and edges represent the testing or communication relationships between processors.

The core of system-level fault diagnosis lies in identifying faulty processors through appropriate testing mechanisms. To this end, researchers have proposed various system-level diagnostic testing models. Preparata, Metze, and Chien \cite{preparata2006connection} proposed the PMC model, which is one of the earliest system-level diagnosis models and also one of the most widely used classical models. Maeng and Malek \cite{malek1980comparison,Maeng1981comparison} proposed the comparison model, namely the MM model. Subsequently, Sengupta and Dahbura \cite{sengupta1992self} improved the MM model and proposed the MM$^*$ model, which serves as an important complement to address the limitations of the PMC model.

The traditional diagnosability of multiprocessor systems focuses only on the maximum number of faulty processors that can be identified, without considering the structure of the remaining network after faults occur. To better characterize the fault tolerance capability of systems under specific fault scenarios, researchers have introduced various generalized notions of diagnosability. 
Lai et al. \cite{lai2005conditional} proposed the concept of conditional diagnosability by restricting that any faulty set cannot contain all the neighbors of any vertex in the graph. Peng et al. \cite{peng2012g} introduced the $g$-good-neighbor diagnosability, which requires that each vertex in the system has at least $g$ fault-free neighbors. Zhang and Yang \cite{zhang2016g} proposed the $g$-extra diagnosability, with the requirement that each remaining component contains at least $g + 1$ vertices. 
Subsequently, Zhang et al. \cite{zhang2022component} further proposed the $g$-component diagnosability, which demands that the remaining graph contains at least $g$ components. 
Based on these studies on diagnosability, Zhang et al. \cite{zhang2023characterization} introduced the concept of cyclic diagnosability, which requires that after removing faulty vertices, at least two remaining components each contain a cycle.

At present, significant progress has been made in the study of cyclic diagnosability. For instance, Zhang et al. \cite{zhang2023characterization} established a metric relationship between cyclic diagnosability and cyclic connectivity in regular networks, and  accordingly determined the cyclic diagnosability of the hypercube $Q_n$, the locally twisted cube $LTQ_n$, and the alternating group network $AN_n$. Up to now, researchers have determined the cyclic diagnosability of the $n$-dimensional balanced hypercube $BH_n$, the $k$-dimensional DCell network $D_{k,n}$, the folded hypercube $FQ_n$, the star graph $S_n$, and the Cayley graphs generated by transposition trees $\mathcal{T}_n$ under the PMC and MM$^*$ models ; see \cite{han2024cyclic,zheng2025cyclic,wang2025cyclic,mei2025cyclic,ZHENG2025109} for more details.

Cayley graphs generated by transposition unicyclic graphs are vertex-transitive, with excellent symmetry and a small diameter. They can be flexibly expanded by increasing the number of vertices $n$, making them suitable for constructing large-scale multiprocessor systems. Numerous properties of such graphs have been extensively studied. For example, Wang and Zhou \cite{yanna2022generalized} proved that the generalized $3$-connectivity of these Cayley graphs is $n-1$ when $n\ge4$. Gu et al. \cite{gu2021conditional} investigated the conditional diagnosability of such graphs under the PMC and MM$^*$ models. Tian et al.\cite{tian2025reliability} showed that the cyclic connectivity of Cayley graphs generated by triangle-free unicyclic graphs is $4n-8$. 

In this paper, we investigate the cyclic diagnosability of Cayley graphs generated by triangle-free unicyclic graphs under the PMC and MM$^*$ models. We conclude that the cyclic diagnosability of Cayley graphs generated by triangle-free unicyclic graphs is $5n-10$ when $n\ge11$.

The remainder of this paper is organized as follows. Section $2$ introduces the preliminaries. Section $3$ analyzes the structure of the cyclic diagnosability of Cayley graphs generated by triangle-free unicyclic graphs and determines its cyclic diagnosability under the PMC and MM$^*$ models. Section $4$ concludes the paper.
\section{Preliminary}
\label{section2}

We first introduce some notation and results that will be used throughout the paper.
Let $G=(V(G),E(G))$ be a simple, finite and undirected graph, where $V(G)$ is its vertex set and $E(G)$ is its edge set. For any vertex $v \in V(G)$,  $N_G(v)=\{u\in V(G)|(u,v)\in E(G)\}$ is the set of all vertices adjacent to \(v\) in \(G\), and the cardinality of \(N_G(v)\) is the degree of \(v\), denoted by \(d_G(v)\). A regular graph is a graph where each vertex has the same degree and a regular graph with vertices of degree $k$ is called a $k$-regular graph. For a subset \( S \subseteq V(G) \), we define the {open neighborhood} of \( S \) as 
\[N_G(S) = \bigcup_{u \in S} N_G(u) \setminus S.\]
Additionally, we denote by \( G[S] \) the {induced subgraph} of \( S \), where 
\[
V(G[S]) = S \quad \text{and} \quad E(G[S]) = \left\{ (u,v) \in E(G) \mid u, v \in S \right\}.
\]
The complement of $S$ is defined as $\overline{S} =V(G)\setminus S=\left \{ v\in V(G)|v \notin S \right \} $. We denote by \(G - S\) the subgraph obtained from \(G\) by deleting the vertices in \(S\) together with their incident edges. 

Let \(C = \left \langle   u_1, u_2, \cdots, u_k\right \rangle\) be a cycle in which \(u_i\) is exactly adjacent to \(u_{i-1}\) and \(u_{i+1}\) for \(2 \leq i \leq k-1\) and \(u_1\) is adjacent to \(u_k\). We define the length of a cycle to be the number of its edges. A cycle with \(k\) edges is called a \(k\)-cycle. The length of a shortest cycle in \(G\) is called the girth of \(G\), denoted by \(g(G)\). The unicyclic graph is a connected graph with exactly one cycle. 
A matching \(M\) in a graph \(G\) is a set of pairwise nonadjacent edges; \(M\) is a perfect matching if and only if \(M\) covers every vertex of the graph \(G\). The symmetric difference between vertex sets \(F_1\) and \(F_2\) is defined as \(F_1 \triangle F_2 = (F_1 \cup F_2) \setminus (F_1 \cap F_2)\). We denote by $odd(G)$ the number of components with $odd$ vertices of $G$. 
\subsection{Two classical diagnostic models}
Next, we introduce two widely recognized diagnostic models for system-level diagnosis: the PMC model and the MM$^*$ model.

Let $F \subseteq V(G)$ denote the set of faulty vertices. For edges $(u, v), (u, w) \in E(G)$, the test rules of the two diagnostic models are defined as follows.

Under the PMC model, adjacent vertices can test one another. If vertex $u$ tests vertex $v$, then $u$ is referred to as the tester and $v$ as the testee. The outcome of this test is denoted by $\sigma(u, v)$, and its value is determined by the following rule: 
$$
\sigma(u, v) =
\begin{cases}
0, & \text{if } \{v, u\} \cap F = \emptyset; \\
1, & \text{if } u \notin F \text{ and } v \in F; \\
0 \text{ or } 1, & \text{if } u \in F,
\end{cases}
$$
where the test outcome is reliable only if the tester is fault-free (i.e., $u \notin F$).

Under the MM$^*$ model, a vertex can test two of its neighbors simultaneously. If vertex $u$ performs a comparison-based test on vertices $v$ and $w$, then $u$ is referred to as the comparator, while $v$ and $w$ are termed the testees. The outcome of this comparison test is denoted by $\sigma_u(v, w)$, and its value is defined as follows:
$$
\sigma_u(v, w) =
\begin{cases}
0, & \text{if } \{u, v, w\} \cap F = \emptyset; \\
1, & \text{if } u \notin F \text{ and } \{v, w\} \cap F \neq \emptyset; \\
0 \text{ or } 1, & \text{if } u \in F,
\end{cases}
$$
where the outcome is reliable only if the comparator is fault-free (i.e., $u \notin F$).

Based on the above testing rules, the following related definitions can be given.The collection of all test outcomes is called a syndrome, denoted by $\sigma$. If there exists a faulty vertex set $F$ (with the corresponding fault-free vertex set being $V(G) - F$) that can produce this syndrome, then $F$ is said to be compatible with $\sigma$. Denote by $\sigma(F)$ the set of all possible syndromes produced by the faulty set $F$.

Let $F_1$ and $F_2$ be two distinct faulty vertex subsets. If $\sigma(F_1) \cap \sigma(F_2) = \emptyset$, then $F_1$ and $F_2$ are said to be distinguishable; otherwise, they are indistinguishable.

Next, we present two lemmas that provide the necessary and sufficient conditions for two distinct faulty sets \(F_1\) and \(F_2\) to be distinguishable under the PMC model and the MM\(^*\) model, respectively.

\begin {lemma}\label{L2.1}\rm(\cite{dahbura19840}).
For any two distinct faulty sets $F_1, F_2 \subseteq V(G)$, $F_1$ and $F_2$ are distinguishable under the PMC model if and only if there exists an edge $(x,y)$, where the vertex $x\in V(G)\setminus(F_1 \cup F_2)$ and $y\in F_1 \triangle F_2$ (see Fig. \ref{fig:PMC}).
\end {lemma}

\begin{figure}[!htbp]
        \centering
        \includegraphics[width=0.6\linewidth]{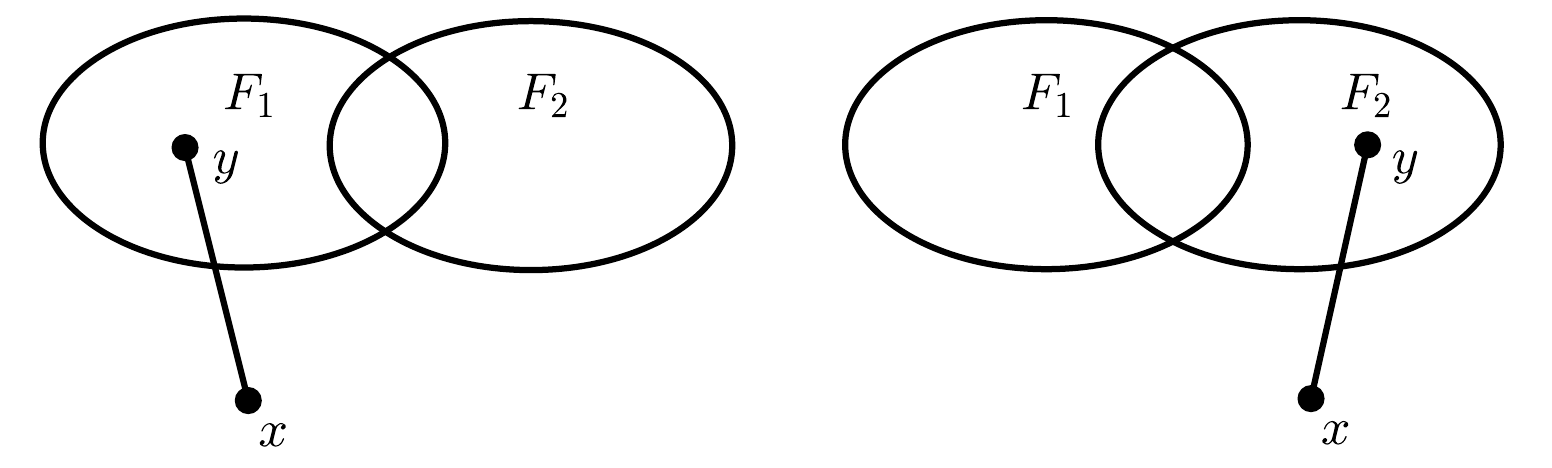}
        \caption{Illustration of the distinguishable sets $F_1$ and $F_2$ under the PMC model.}
        \label{fig:PMC}
\end{figure}

\begin {lemma}\label{L2.2} \rm(\cite{sengupta1992self}).
For any two distinct faulty sets $F_1, F_2 \subseteq V(G)$, $F_1$ and $F_2$ are distinguishable under the MM$^*$ model if and only if one of the following three conditions is satisfied (see Fig. \ref{fig:MM})
\begin{enumerate}
    \item[(1)] There exist vertices $x, y \in V(G) \setminus (F_1 \cup F_2)$ and $z \in F_1 \triangle F_2$ such that $(x, y), (x, z) \in E(G)$.
    \item[(2)] There exist vertices $y, z \in F_2 \setminus F_1$ and $x \in V(G) \setminus (F_1 \cup F_2)$ such that $(x, y), (x, z) \in E(G)$.
    \item[(3)] There exist vertices $y, z \in F_1 \setminus F_2$ and $x \in V(G) \setminus (F_1 \cup F_2)$ such that $(x, y), (x, z) \in E(G)$.
\end{enumerate}
\end {lemma}

\begin{figure}[!htbp]
    \centering
    \includegraphics[width=0.4\linewidth]{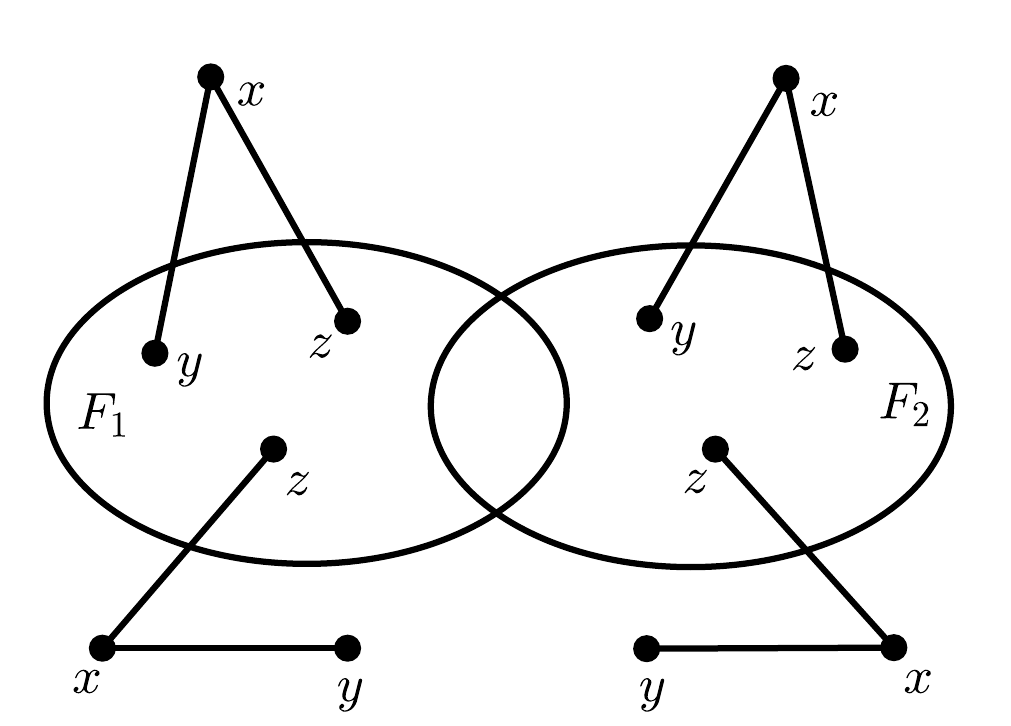}
    \caption{Illustration of the distinguishable sets $F_1$ and $F_2$ under the MM$^*$ model.}
    \label{fig:MM}
\end{figure}

\begin {definition}\label{D2.3} \rm(\cite{10.1093/comjnl/bxad012}).
For a graph $G$, a subset $F\subseteq V(G)$ is called a cyclic vertex subset of $G$ if $G-F$ is disconnected and at least two components in $G-F$ contain a cycle.
\end {definition}

\begin {definition}\label{D2.4} \rm(\cite{zhang2024cyclic}).
A system $G$ is said to be cyclic $t$-diagnosable if and only if for any two distinct faulty cyclic subsets $F_1, F_2 \subseteq V(G)$ with $|F_1| \le t$ and $|F_2| \le t$, $F_1$ and $F_2$ are distinguishable. The cyclic diagnosability of graph $G$, denoted by $ct(G)$, is defined as the maximum value of $t$ such that $G$ is cyclic $t$-diagnosable.
\end {definition}

\subsection{Cayley graphs generated by transposition triangle-free unicyclic graphs}

Let $\Gamma$ be a finite group, and $S \subseteq \Gamma \setminus \{e\}$, where $e$ denotes the identity element of the group. The Cayley digraph $\operatorname{Cay}(\Gamma, S)$ is defined as the digraph with vertex set $\Gamma$ and arc set $\{(g, g \cdot s) \mid g \in \Gamma,\ s \in S\}$, the arc $(g, g \cdot s)$ is said to have label $s$. In particular, if $S^{-1} = S$ , the digraph can be regarded as an undirected graph, called a Cayley graph. In that case, the graph is connected if and only if $S$ is a generating set of $\Gamma$.  Notably, all Cayley graphs are vertex-transitive.

In this paper, we consider Cayley graphs $\operatorname{Cay}(\operatorname{Sym}(n), \tau)$, where $\operatorname{Sym}(n)$ is the group of all permutations in the set $[n] = \{1, 2, \cdots, n\}$ and $\tau$ is a set of transpositions of $\operatorname{Sym}(n)$. To clearly describe the generating structure, we define $G(\tau)$ as the transposition generating graph of $\operatorname{Cay}(\operatorname{Sym}(n), \tau)$: its vertex set is $[n]$, and its edge set is $\{(i, j) \mid (ij) \in \tau,\ i, j \in [n]\}$. The Cayley graph $\operatorname{Cay}(\operatorname{Sym}(n),\tau)$ is connected if and only if $G(\tau)$ is connected. 

For notation, we use $p_1 p_2 \cdots p_n$ to denote the permutation
\[
\begin{pmatrix}
1 & 2 & \cdots & n \\
p_1 & p_2 & \cdots & p_n
\end{pmatrix},
\]
and write $(kl)$ for the transposition that exchanges the entries in positions $k$ and $l$:
\[
(p_1 \cdots p_k \cdots p_l \cdots p_n)\cdot(kl) = p_1 \cdots p_l \cdots p_k \cdots p_n.
\]

If $G(\tau)$ is a tree, then $\operatorname{Cay}(\operatorname{Sym}(n),\tau)$ is called the Cayley graph generated by transposition trees, and denoted by $\mathcal{T}_n$. This constitutes an extremely rich family of graphs. In particular, when $G(\tau)$ is a star graph, $\operatorname{Cay}(\operatorname{Sym}(n),\tau)$ corresponds to the well-known star graph $S_{n}$; when $G(\tau)$ is a path graph, it yields the well-known bubble-sort graph $B_{n}$.  If $G(\tau)$ is a unicyclic graph, then $\operatorname{Cay}(\operatorname{Sym}(n),\tau)$ is called the Cayley graph generated by transposition unicyclic graphs and denoted by $UG_{n}$. The richness of unicyclic graphs implies that $UG_{n}$ also forms an extremely rich family of graphs. In particular, when $G(\tau)$ is a cycle, it corresponds to the well-known modified bubble-sort graph $MB_{n}$ (see Fig. \ref{fig:MB4}).

\begin{figure}[!ht]
    \centering
    \includegraphics[width=0.7\linewidth]{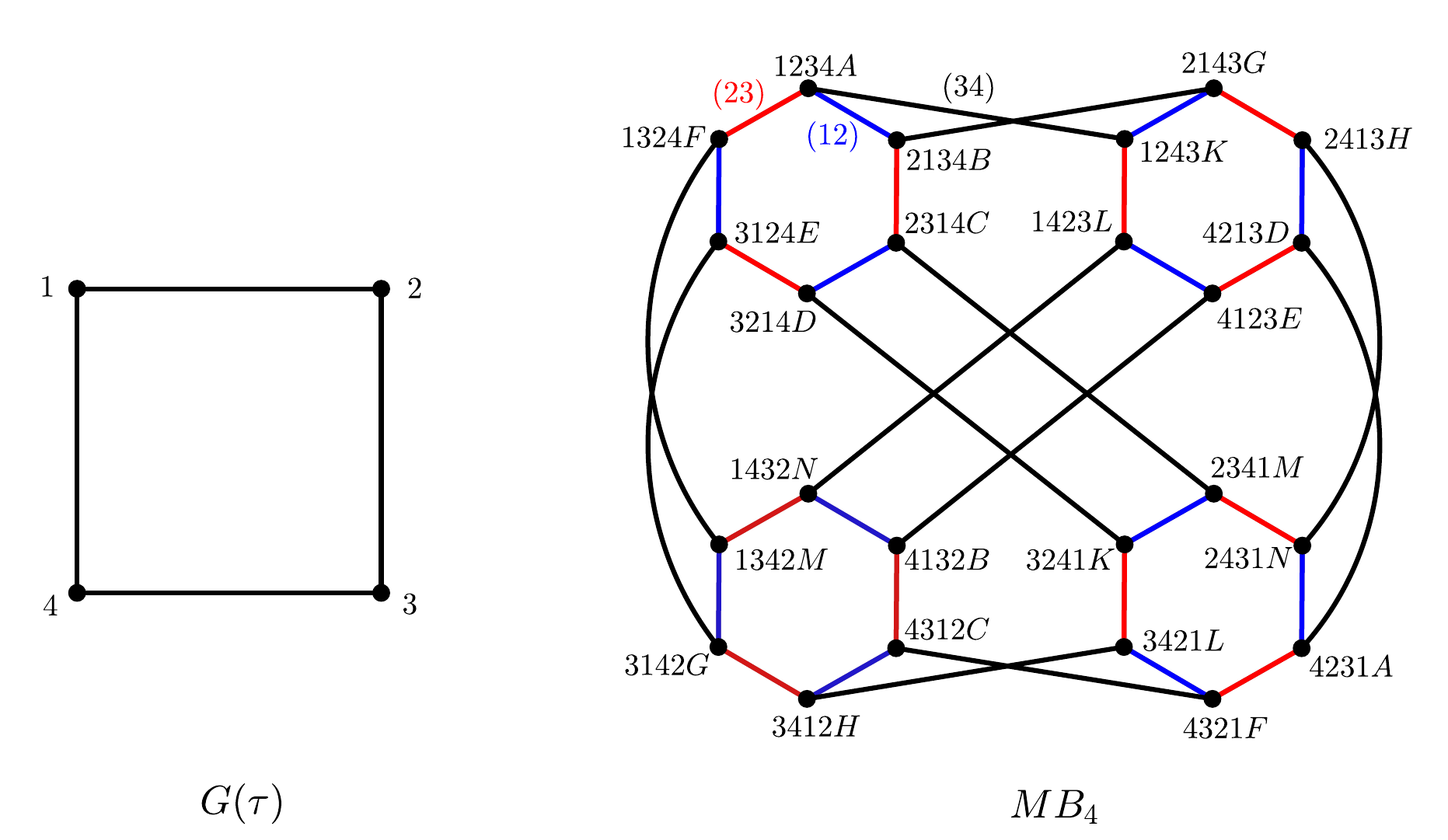}
    \caption{Cayley graph $MB_4$ and its transposition generating graph $G(\tau)$}
    \label{fig:MB4}
\end{figure}

In this paper, we consider the case where $G(\tau)$ is a triangle-free unicyclic graph. The resulting graph is referred to as the Cayley graph generated by transposition triangle-free unicyclic graphs, and is still denoted by $UG_n$. To avoid misunderstanding, we will specifically indicate when $G(\tau)$ is triangle-free. 

\begin {lemma}\label{bipartite}\rm(\cite{LI202095}).
The Cayley graph $UG_n$ is an $n$-regular bipartite graph with $n!$ vertices.
\end {lemma}

\begin {lemma}\label{girth}\rm(\cite{YU201386}).
 The girth of $UG_n$ is 4. Let $m$ be the length of the unique cycle in $G(\tau)$. When $m = 3$, any $4$-cycle $\left \langle u_1, u_2, u_3, u_4 \right \rangle $ in $UG_n$ takes one of the following two forms:
\begin{enumerate}
    \item [(1)] $u_2 = u_1\cdot(ij)$, $u_3 = u_2\cdot(kl)$, $u_4 = u_3\cdot(ij)$, $u_1 = u_4\cdot(kl)$, where $i,j,k,l$ are distinct.
    \item [(2)] $u_2 = u_1\cdot(sp)$, $u_3 = u_2\cdot(pt)$, $u_4 = u_3\cdot(sp)$, $u_1 = u_4\cdot(st)$, where $s,p,t$ form the unique 3-cycle in $G(\tau)$.
\end{enumerate}
When $m \geq 4$, any 4-cycle in $UG_n$ only takes the first form described above.
\end {lemma}

\begin {lemma}\label{neighbor}\rm(\cite{YU201386}).
Let $m$ be the length of the unique cycle in $G(\tau)$. For any two distinct vertices $u, v$ in $UG_n$, the following conclusions hold:
\begin{enumerate}
    \item[(1)] When $m = 3$, $|N_{UG_n}(u) \cap N_{UG_n}(v)| \leq 3$. 
    \item[(2)] When $m \geq 4$, $|N_{UG_n}(u) \cap N_{UG_n}(v)| \leq 2$.
    \item [(3)] If $|N_{UG_n}(u) \cap N_{UG_n}(v)| = 2$, then these two common neighbors can be expressed as $u_1 = u\cdot(ij) = v\cdot(kl)$ and $u_2 = u\cdot(kl) = v\cdot(ij)$, where $i,j,k,l$ are distinct.
    \item[(4)] If $|N_{UG_n}(u) \cap N_{UG_n}(v)| = 3$ (this case only exists when $m = 3$), then these three common neighbors take the form $u_1 = u\cdot(st) = v\cdot(sp)$, $u_2 = u\cdot(sp) = v\cdot(pt)$, $u_3 = u\cdot(pt) = v\cdot(st)$, where $\langle s, p, t\rangle$ is the unique triangle in $G(\tau)$.
\end{enumerate}
\end {lemma}

\begin {lemma}\label{large component 1}\rm(\cite{LI202095}).
Let $UG_n$ be the Cayley graph generated by transposition triangle-free unicyclic graphs, where $1 \leq p \leq n-2$ and $n \geq 4$, and let $F$ be a vertex set of $UG_n$. If $\left| F \right| \leq pn - \frac{p\left( p+1 \right)}{2}$, then $UG_n - F$ contains a large component, and the number of vertices in the remaining small components does not exceed $p-1$.
\end {lemma}


\begin {theorem}\label{odd}\rm(\cite{bondy1976graph}).
A graph $G$ has a perfect matching if and only if $odd(G - S) \leq |S|$ for all $S \subseteq V(G)$.
\end {theorem}

\begin {lemma}\label{matching}\rm(\cite{bondy1976graph}).
If $G$ is a $k$-regular bipartite graph, then $G$ has a perfect matching.
\end {lemma}

\section{Main result}
\label{section2}
\setlength{\parindent}{2em}
In this section,  we focus on the cyclic diagnosability of the Cayley graph generated by transposition triangle-free unicyclic graph, we begin by examining the internal structure of $UG_n$.

\begin{lemma}\label{H}
Let \(UG_n\) denote the Cayley graph generated by transposition triangle-free unicyclic graphs. Let \(u\) be a vertex of \(UG_n\) and let \(C_1, C_2\) be two distinct $4$-cycles such that \(V(C_1) \cap V(C_2) = \{u\}\). Denote by \(v_1\) and \(v_2\) the vertices on \(C_1\) and \(C_2\), respectively, that are not adjacent to \(u\) $($see Fig. $\ref{fig:3.1}$$)$. Then we have \(d(v_1, v_2) \geq 3\).
\end{lemma}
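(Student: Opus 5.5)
The plan is to translate the distance condition into an identity among transpositions, and then kill that identity with a support and cycle-type count. Since $G(\tau)$ is triangle-free, the unique cycle has length $m\ge 4$. By Lemma \ref{girth}, every $4$-cycle through $u$ therefore has the form $\langle u, u(ij), u(ij)(kl), u(kl)\rangle$, where $(ij),(kl)\in\tau$ have disjoint supports. Hence
\[
v_1=u\cdot(ij)(kl),\qquad v_2=u\cdot(ab)(cd),
\]
with $(ab),(cd)\in\tau$ disjoint. Because $V(C_1)\cap V(C_2)=\{u\}$, the four neighbours $u(ij),u(kl),u(ab),u(cd)$ are distinct, so the edges $ij,kl,ab,cd$ of $G(\tau)$ are pairwise distinct. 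Moreover $v_1\neq v_2$.

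First, $UG_n$ is bipartite (Lemma \ref{bipartite}) and both $v_i$ lie at distance $2$ from $u$, so $d(v_1,v_2)$ is even and nonzero. It therefore suffices to exclude $d(v_1,v_2)=2$. Suppose $v_1$ and $v_2$ have a common neighbour. Then $v_2=v_1\cdot xy$ for some $x,y\in\tau$, and so
\[
\pi:=(ij)(kl)(ab)(cd)=xy .
\]
A product of two transpositions is the identity, a $3$-cycle, or a product of two disjoint transpositions. In every case it moves at most $4$ points, and it moves exactly $4$ only when $x$ and $y$ are disjoint and $xy=x\cdot y$ has cycle type $2^2$.

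Next, compute the cycle structure of $\pi=\sigma_1\sigma_2$, where $\sigma_1=(ij)(kl)$ and $\sigma_2=(ab)(cd)$ are the involutions of two matchings $M_1=\{ij,kl\}$ and $M_2=\{ab,cd\}$ of $G(\tau)$. The union $H=M_1\cup M_2$ has $4$ distinct edges and is a disjoint union of alternating paths and alternating even cycles.
\begin{itemize}
\item On an alternating path with $e$ edges, $\sigma_1\sigma_2$ acts as a single $(e+1)$-cycle.
\item On an alternating cycle of length $2r$, it acts as two $r$-cycles.
\end{itemize}
Since $G(\tau)$ is unicyclic with its unique cycle of length $m\ge 4$, an alternating cycle inside the $4$-edge graph $H$ can only occur when $m=4$ and $H$ is exactly that cycle.

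Otherwise $H$ is a forest of alternating paths with $4$ edges in total, and $\pi$ moves $\sum(e_t+1)=4+\#\{\text{components}\}\ge 5$ points. This contradicts the bound of at most $4$ moved points.

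In the remaining case $H$ is the $4$-cycle $\langle 1,2,3,4\rangle$ of $G(\tau)$, say with $\sigma_1=(12)(34)$ and $\sigma_2=(23)(41)$. A direct computation gives $\pi=(13)(24)$. Then $xy=(13)(24)$ forces $\{x,y\}=\{(13),(24)\}$. But $13$ and $24$ are chords of the unique cycle, and such chords would create a triangle or a second cycle, so $x,y\notin\tau$. This is a contradiction, hence $d(v_1,v_2)\ge 3$.

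The main obstacle is the bookkeeping in the cycle-type computation for a product of two matching involutions. I will verify it carefully on paths of lengths $1$ through $4$ and on the alternating $4$-cycle; everything else is a direct consequence of Lemma \ref{girth} and bipartiteness.
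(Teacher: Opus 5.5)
Your proof is correct and follows essentially the same route as the paper. You split according to whether the four generator edges form the $4$-cycle of $G(\tau)$, which is the paper's case $\{i_1,i_2,i_3,i_4\}=\{j_1,j_2,j_3,j_4\}$. Otherwise, at least $5$ moved points exceeds the at most $4$ moved by a product of two transpositions; in the $4$-cycle case, a common neighbour would force $(13),(24)\in\tau$, contradicting triangle-freeness. Your alternating path/cycle computation and the bipartite parity step make rigorous what the paper states briefly: that $v_1,v_2$ differ in at least $5$ positions, and that distances $0$ and $1$ need not be considered.
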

\setlength{\parindent}{2em}
\noindent{\bf Proof.} 
By Lemma \ref{girth}, there exist four different transpositions $(i_1i_2), (i_3i_4)$ and $(j_1j_2), (j_3j_4)$ such that 
$$C_1=\langle u, u\cdot(i_1i_2), u\cdot(i_1i_2)\cdot(i_3i_4), u\cdot(i_3i_4)\rangle, C_2=\langle u, u\cdot(j_1j_2), u\cdot(j_1j_2)\cdot(j_3j_4), u\cdot(j_3j_4)\rangle,$$
where $i_1,i_2,i_3,i_4$ are pairwise distinct and $j_1,j_2,j_3,j_4$ are pairwise distinct. 
It suffices to show that $$d(u\cdot(i_1i_2)\cdot(i_3i_4), u\cdot(j_1j_2)\cdot(j_3j_4))\ge 3.$$ 

\begin{figure}[H]
    \centering
    \includegraphics[width=0.5\linewidth]{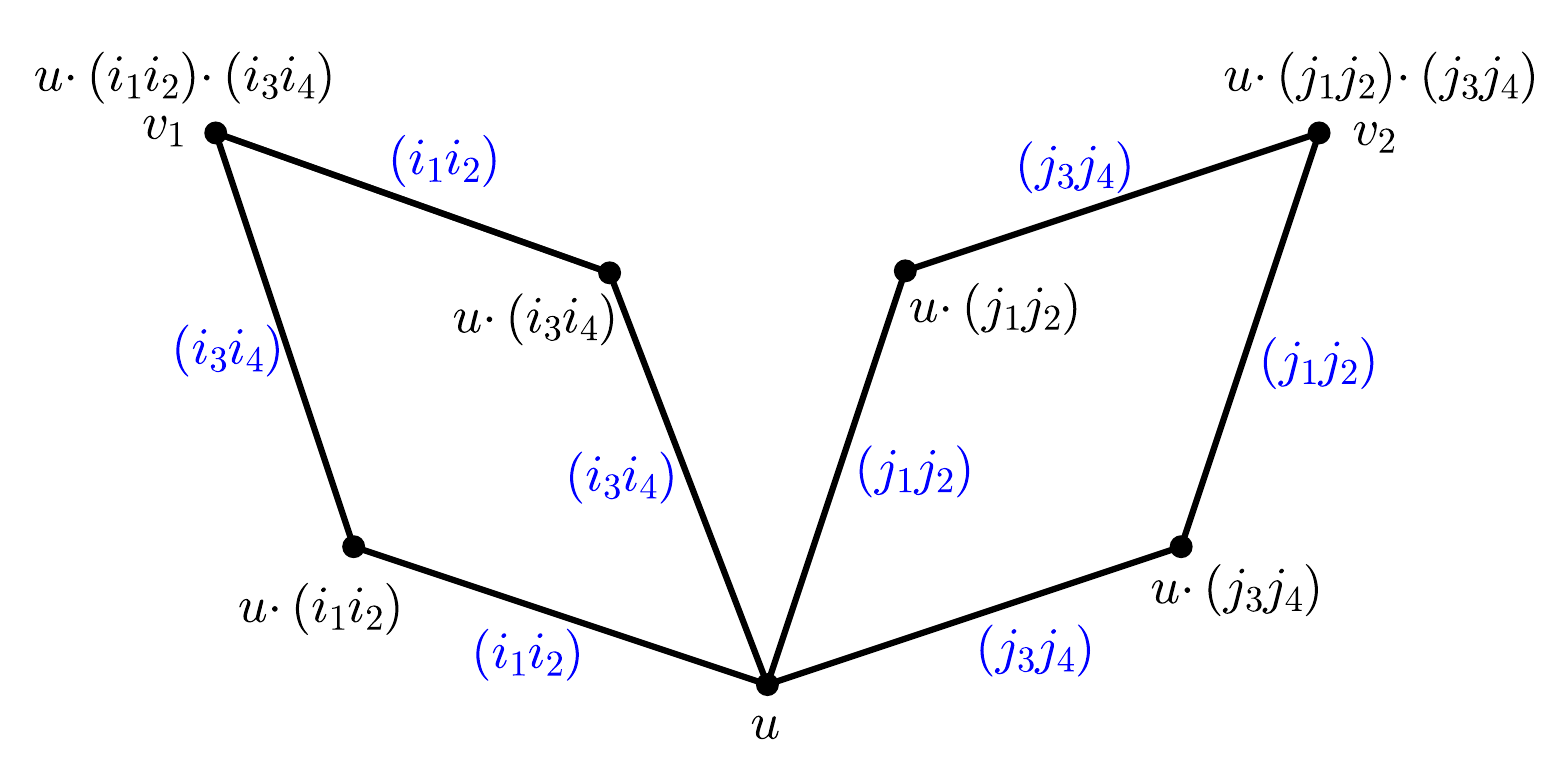}
    \caption{Illustration of two cycles $C_1, C_2$ of Lemma \ref{H}}
    \label{fig:3.1}
\end{figure}

If $|\{i_1, i_2, i_3, i_4\} \cap \{j_1,j_2,j_3,j_4\}|\le 3$, then $u\cdot(i_1i_2)\cdot(i_3i_4)$ and $u\cdot(j_1j_2)\cdot(j_3j_4)$ differ in at least $5$ positions because $\{(i_1i_2), (i_3i_4)\}\cap \{(j_1j_2), (j_3j_4)\}=\emptyset$. Since a single transposition can only alter two positions of the vertex, we know that the distance between $u\cdot(i_1i_2)\cdot(i_3i_4)$ and $u\cdot(j_1j_2)\cdot(j_3j_4)$ is at least 3, as desired.

Otherwise, $\{i_1, i_2, i_3, i_4\} =\{j_1,j_2,j_3,j_4\}$. By the definition of $UG_n$, $(i_1,i_2), (i_3,i_4), (j_1,j_2), (j_3,j_4)$ are four different edges of $G(\tau)$. It follows that the induced subgraph $G(\tau)[i_1,i_2, i_3,i_4]$ is a cycle of length four. Without loss of generality, assume $(j_1j_2)=(i_2i_3)$, $(j_3j_4)=(i_1i_4)$. Note that $G(\tau)$ is triangle-free. We can find $d(u\cdot(i_1i_2)\cdot(i_3i_4), u\cdot(j_1j_2)\cdot(j_3j_4))\ge 3$ (On the contrary, suppose $d(u\cdot(i_1i_2)\cdot(i_3i_4), u\cdot(j_1j_2)\cdot(j_3j_4))=2$, then $v_1 =(v_2 \cdot (i_1i_3)) \cdot (i_2i_4)=(v_2 \cdot (i_2i_4))\cdot (i_1i_3)$. It follows that $(i_1, i_3)$ is an edge of $G(\tau)$. It contradicts the fact $G(\tau)[i_1,i_2, i_3,i_4]$ is a cycle of length four.)

\hfill$\Box$\\

\begin{lemma}\label{A1}
Let \( UG_n \) be a Cayley graph generated by transposition triangle-free unicyclic graphs and $n\geq 5$. Let \( S \) be a subset of \( V(UG_n) \) satisfying \( |S| = 5 \) and \( UG_n[S] \) contains a $4$-cycle. Then, the following two results hold:  
\begin{enumerate}
 \item[{\rm(1)}] If \( UG_n[S] \) is connected, then \( |N_{UG_n}(S)| \geq 5n - 12 \).
 \item[{\rm(2)}] If \( UG_n[S] \) is disconnected, then \( |N_{UG_n}(S)| \geq 5n - 10 \).
 \end{enumerate}
\end{lemma}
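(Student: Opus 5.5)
Let $C=\langle a_1,a_2,a_3,a_4\rangle$ be the 4-cycle in $UG_n[S]$ and $x$ the fifth vertex, so $S=V(C)\cup\{x\}$. I will bound $|N_{UG_n}(S)|$ by counting edges leaving $S$ and then correcting for vertices outside $S$ that receive several such edges.

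\textbf{Step 1 (edges leaving $S$).} By Lemma \ref{bipartite}, $UG_n$ is $n$-regular and bipartite, so $C$ is induced (no chords). If $x$ has $k$ neighbours in $V(C)$, then $k\le 2$: three neighbours of $x$ on $C$ would include two vertices of the same colour class together with a third, giving three common neighbours of a pair, which Lemma \ref{neighbor}(2) forbids since $m\ge 4$. Hence the number of edges leaving $S$ is $5n-8-2k$.

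\textbf{Step 2 (repeated neighbours).} For a vertex $w\notin S$ let $r(w)=|N(w)\cap S|$. Then
\[
|N_{UG_n}(S)|=(5n-8-2k)-\sum_{w\notin S}(r(w)-1)^{+}.
\]
Any outside $w$ has at most two neighbours on $C$, and these lie in the same colour class. If $w$ were adjacent to $a_1$ and $a_3$, then $a_1,a_3$ would have three common neighbours $a_2,a_4,w$, again contradicting Lemma \ref{neighbor}(2). So each outside $w$ has at most one neighbour on $C$, and $r(w)\ge 2$ forces $w\sim x$ together with exactly one $a_i$. Thus the correction term equals the number of common neighbours of $x$ with vertices of $C$ that lie outside $S$.

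\textbf{Step 3 (case analysis).} For (2), $k=0$ and $x$ is at distance at least $2$ from $C$. Each $w\sim x$ has at most one neighbour on $C$, and $x$ shares at most $2$ neighbours with any single $a_i$ (Lemma \ref{neighbor}(3)). The main obstacle is to show that no outside vertex is a common neighbour at all, i.e.\ that $d(x,a_i)=2$ for no $i$. Failing that, I would instead aim for the weaker bound of at most $2$ such $w$ in total. My first attempt is to use the transposition description: writing $a_2=a_1(ij)$, $a_4=a_1(kl)$, $a_3=a_1(ij)(kl)$, the assumption $x=a_1\cdot s\cdot t$ gives $w=a_1 s$. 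If also $x=a_2 s' t'$, then $x$ lies on a 4-cycle through $a_1$ and $a_2$ of the kind described by Lemma \ref{girth} and Lemma \ref{H}, and such a configuration forces $x$ adjacent to $C$ or forces a triangle in $G(\tau)$. This should pin the count down to $5n-8-c$ with $c\le 2$, and I expect the target $5n-10$ to come out exactly with $c\le 2$ when $k=0$. If that is the case, then the edge count for (2) should read $5n-8$ minus at most $2$.

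For (1), $k\in\{1,2\}$. When $k=1$, say $x\sim a_1$, the vertices $a_2,a_4$ are at distance $2$ from $x$. Each of them shares with $x$ at most one further common neighbour besides $a_1$ (Lemma \ref{neighbor}(2)), and $a_3$ is at odd distance from $x$, so it contributes nothing. This gives $5n-10-2=5n-12$. When $k=2$, $x$ is adjacent to $a_1$ and $a_3$, which are then three common neighbours of $a_1$'s partners. More precisely, $a_2$ and $x$ would share $a_1,a_3$, which is allowed, but $a_2,a_4,x$ are all common neighbours of $a_1,a_3$, which is forbidden. So $k=2$ is impossible, and the bound $5n-12$ for (1) follows. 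The delicate point throughout is Step 3(2), where I will lean on Lemma \ref{H} and triangle-freeness to rule out excess common neighbours.
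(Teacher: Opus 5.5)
\textbf{Part (1).} This part is correct and is essentially the paper's count: after excluding $x\sim a_3$ via Lemma \ref{neighbor}(2), only $a_2,a_4$ can share a further neighbour with $x$, at most one each.

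\textbf{Part (2) has a genuine gap.} This is the substantive half of the lemma, and you never establish that the correction term is at most $2$.

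\emph{Your primary target is false.} You aim to show that $x$ has no common neighbour with any $a_i$. But in the paper's extremal configuration (Lemma \ref{lemma3.3}), $v=u\cdot(23)\cdot(45)$ is non-adjacent to $C_1$, yet it shares the two neighbours $u\cdot(23)$ and $u\cdot(45)$ with $u$. So the correction there is exactly $2$, and the bound $5n-10$ is attained.

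\emph{Your fallback targets a case that cannot occur.} Since $a_1\sim a_2$ lie in different colour classes, $x$ cannot be at distance $2$ from both, so ``$x=a_1st=a_2s't'$'' never happens.

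\emph{What your argument actually supports is too weak.} By bipartiteness, $x$ can share neighbours only with the antipodal pair of $C$ in its own class, say $a_2,a_4$. Each shares at most $2$ by Lemma \ref{neighbor}(2), and the two sets are disjoint by your Step 2. That bounds the correction by $4$ and yields only $|N(S)|\ge 5n-12$, not $5n-10$.

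\textbf{The missing ingredient is Lemma \ref{H}, applied to that antipodal pair.} Suppose $N(x)\cap N(a_2)=\{w_1,w_2\}$. Then $C'=\langle a_2,w_1,x,w_2\rangle$ is a $4$-cycle with $V(C)\cap V(C')=\{a_2\}$; indeed, if some $w_i\in\{a_1,a_3\}$, then $x$ would be adjacent to $C$. The vertices opposite $a_2$ on $C$ and on $C'$ are $a_4$ and $x$. So Lemma \ref{H} gives $d(x,a_4)\ge 3$, i.e.\ $N(x)\cap N(a_4)=\emptyset$, and symmetrically with $a_2$ and $a_4$ swapped.

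Hence either one intersection has size $2$ and the other is empty, or both have size at most $1$. In every case the correction is at most $2$, so $|N(S)|\ge 5n-8-2=5n-10$. This is precisely the paper's argument for (2). Lemma \ref{H} is the extra input that Lemma \ref{neighbor} alone cannot supply.
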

\setlength{\parindent}{2em}
\noindent{\bf Proof.}
Let \( S = \{u_1, u_2, u_3, u_4, v\}\), and let \( C = \left \langle   u_1, u_2, u_3, u_4\right \rangle  \) be the induced cycle in \(UG_n[S] \).
\begin{enumerate}
    \item[{\rm(1)}] Suppose \( UG_n[S] \) is connected. In this case, \( UG_n[S] \) is a subgraph containing a $4$-cycle \( C = \left \langle   u_1, u_2, u_3, u_4\right \rangle \) with a pendant vertex \( v \) attached to one of the cycle's vertices. Without loss of generality, assume \( (u_1, v) \in E(UG_n) \). By Lemma \ref{neighbor}(2), vertices \( u_1, u_2, u_3, u_4 \) share no common neighbors outside \( C \),  \( v \nleftrightarrow u_3 \) and \( |N_{UG_n \setminus C}(v) \cap N(u_i)| \leq 1 \) for \( i \in \{2, 4\} \) (see Fig. \ref{fig:3.2}). By Lemma \ref{bipartite},  \( UG_n \) is a bipartite graph. It follows that \( v \nleftrightarrow u_2 \),  \( v \nleftrightarrow u_4 \) and \( N_{UG_n}(v) \cap N_{UG_n}(u_3) = \emptyset \). Thus,   
\[
|N_{UG_n}(S)| \geq 4(n-3) + (n-2) + 2 = 5n - 12.
\]

\begin{figure}[H]
    \centering
    \includegraphics[width=0.5\textwidth]{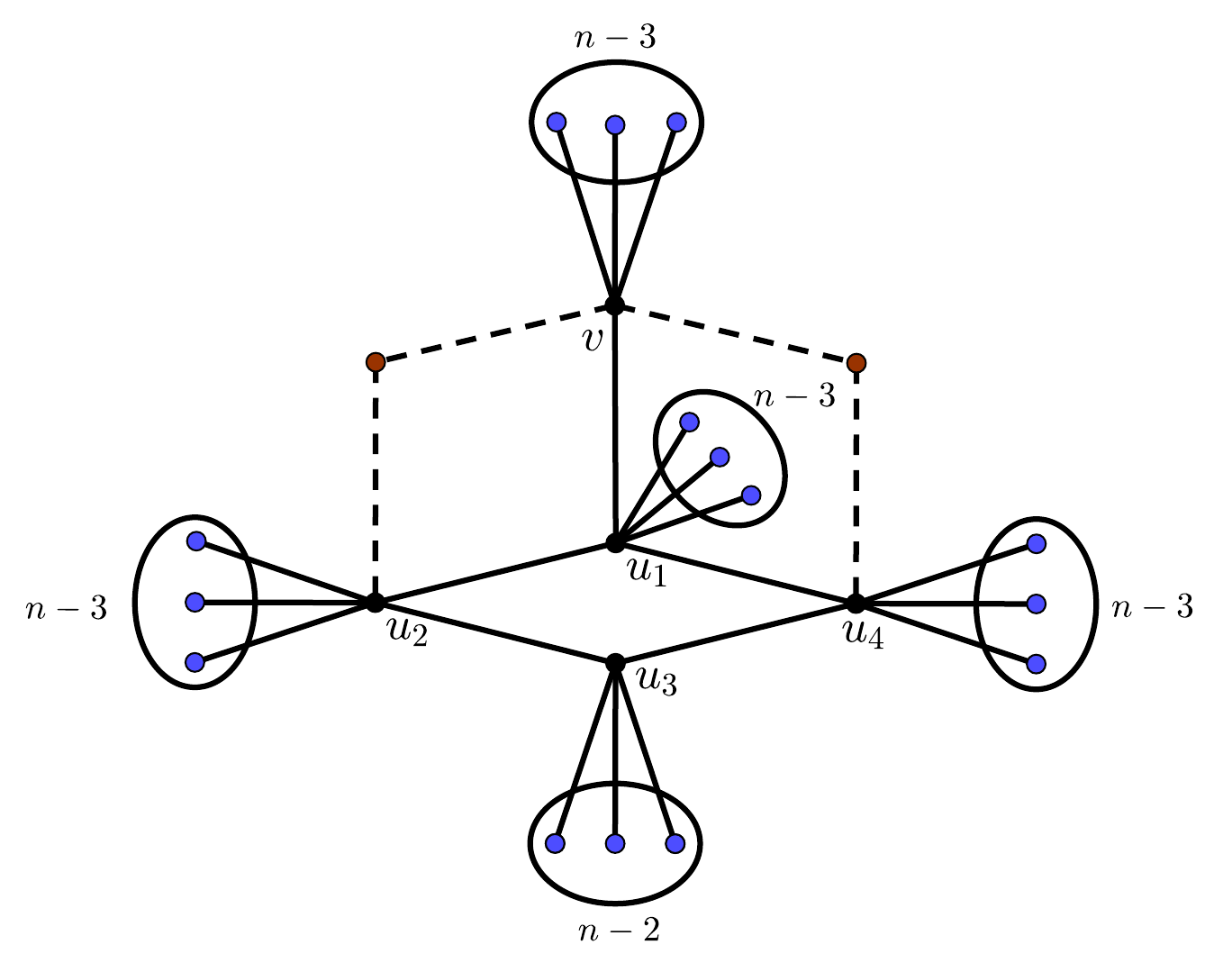}
    \caption{Illustration of the neighborhood of \( UG_n[S] \) when \( UG_n[S] \) is connected in Lemma \ref{A1}}
    \label{fig:3.2}
\end{figure}

\item[{\rm(2)}] Suppose \( UG_n[S] \) is disconnected. In this case, \( N_{UG_n}(v) \cap V(C) = \emptyset \). By Lemma \ref{bipartite}, \( UG_n \) is a bipartite graph. Let $A=\{u_1, u_3, \cdots\}$ and $B=\{u_2, u_4, \cdots\}$ be the two partitions of $UG_n$ and without loss of generality, assume $v\in B$. Then \( v \) shares no common neighbors with \( u_1 \) and \( u_3 \).  By Lemma \ref{neighbor}(2) and Lemma \ref{H}, \( \left| N_{UG_n}(v) \cap N_{UG_n}\left( \{u_2, u_4\} \right) \right| \leq 2 \), which implies \( \left| N_{UG_n}(v) \cap N_{UG_n}(C) \right| \leq 2 \). Therefore,  
\begin{align*}
|N_{UG_n}(S)| &= \left| N_{UG_n}(v) \right| + \left| N_{UG_n}(C) \right| - \left| N_{UG_n}(v) \cap N_{UG_n}(C) \right| - 2\left| N_{UG_n}(v) \cap V(C) \right| \\
&\geq n + (4n - 8) - 2 - 0\\
&= 5n - 10.
\end{align*}
\hfill$\Box$
 \end{enumerate}

\begin{lemma}\label{lemma3.3}
Let \( UG_n \) be the Cayley graph generated by transposition triangle-free unicyclic graphs and $n\geq 5$. Then, under both the PMC model and the MM$^*$ model, \( ct(UG_n) \leq 5n - 10 \).
\end{lemma}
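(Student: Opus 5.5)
The upper bound will come from two distinct cyclic faulty sets of size at most $5n-9$ that are indistinguishable under both models. This shows $UG_n$ is not cyclic $(5n-9)$-diagnosable. The natural choice is suggested by Lemma \ref{A1}(1): take a $4$-cycle $C=\langle u_1,u_2,u_3,u_4\rangle$, obtained from two commuting transpositions $(ij),(kl)$ with $i,j,k,l$ distinct (these exist since $G(\tau)$ is triangle-free with $n\ge5$, so it has two disjoint edges). Add a pendant vertex $v=u_1\cdot(ab)$, where $(ab)\in\tau\setminus\{(ij),(kl)\}$, and set $S=V(C)\cup\{v\}$.

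We then compute $N_{UG_n}(S)$ exactly. Since $UG_n$ is bipartite and $m\ge4$, Lemma \ref{neighbor}(2),(3) gives that the only overlaps among the neighborhoods are the following:
\begin{enumerate}
\item[(i)] the cycle vertices themselves;
\item[(ii)] $v$'s neighbor $u_1$;
\item[(iii)] at most one common neighbor of $v$ with $u_2$, and at most one with $u_4$.
\end{enumerate}
If we choose $(ab)$ disjoint from $\{i,j\}$, then $v\cdot(ij)=u_2\cdot(ab)$ is a common neighbor of $v$ and $u_2$. Similarly, if $(ab)$ is disjoint from $\{k,l\}$, there is a common neighbor of $v$ and $u_4$. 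With such a choice, $|N(S)|=4(n-2)+(n-1)-2=5n-11$. If only one overlap occurs, then $|N(S)|=5n-10$.

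Set $F_1=N(S)$ and $F_2=N(S)\cup\{v\}$, or, to be symmetric and avoid parity issues, $F_2=N(S)\cup\{u_1\}$ style variants. Then $F_1\triangle F_2$ is a single vertex of $S$, and $|F_2|\le 5n-9$ in either case. Indistinguishability follows from Lemma \ref{L2.1} and Lemma \ref{L2.2}. Every neighbor outside $F_1\cup F_2$ of a vertex in $S$ would have to lie in $N(S)\subseteq F_1\cap F_2$, so no edge joins $V\setminus(F_1\cup F_2)$ to $F_1\triangle F_2$. For MM$^*$, conditions (2) and (3) fail because $|F_1\triangle F_2|=1$, and condition (1) fails for the same adjacency reason.

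A cleaner alternative, which I would try first, is $F_1=N(C)\cup\{v\}$ and $F_2=N(C)\setminus\{v\}$ with $v$ one specific neighbor. Either way, I must check that both $F_1$ and $F_2$ are cyclic vertex subsets. The components of $UG_n-F_1$ are (a) the component of $S$ (or of $C$), which contains the $4$-cycle $C$, and (b) the rest of the graph. By Lemma \ref{large component 1} with $p=6$ or so, since $|F_i|\le 5n-9\le 6n-21$ for $n$ large, $UG_n-F_i-S$ has a large component plus small components of total size at most $p-1$. The large component has over $n!-6n$ vertices, and it contains a cycle because it has minimum degree considerations; simply, a huge connected subgraph of a graph whose non-large part is tiny must contain a $4$-cycle not meeting $F_i\cup S$. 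I would argue the latter by picking any vertex far from $S$ (distance $\ge 4$, which exists since the diameter exceeds 4) together with a $4$-cycle through it avoiding $F_i$.

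The main obstacle is the size bookkeeping. I need the faulty sets to have size at most $5n-9$ while containing all of $N(S)$ except the distinguishing vertex, so I must control the overlaps in $N(S)$ precisely. I would first compute $|N(C)|=4n-8$ exactly, then choose $v\in N(C)$ and $S=V(C)\cup\{v\}$. The new neighbors then number at most $n-1$, giving $|N(S)|\le 5n-9$. With $F_1=N(S)$ and $F_2=N(S)\cup\{v\}$ minus nothing, the only remaining task is to verify that the total size is $\le 5n-9$, which holds since $|N(S)\cup\{v\}|$ counts $v$ already excluded.
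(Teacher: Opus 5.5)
There is a genuine gap. The vertex you use to separate $F_1$ from $F_2$ is adjacent to the cycle, and that makes the two sets distinguishable. In your main construction, $v=u_1\cdot(ab)$ is a pendant vertex of $S$, with $F_1=N(S)$ and $F_2=N(S)\cup\{v\}$. But $u_1\in S$ lies outside $F_1\cup F_2$, and $(u_1,v)\in E(UG_n)$ with $v\in F_1\triangle F_2$. So Lemma \ref{L2.1} makes $F_1,F_2$ distinguishable under PMC. Under MM$^*$, condition (1) of Lemma \ref{L2.2} holds with $x=u_1$, $y=u_2$, $z=v$. Your check that ``every neighbor outside $F_1\cup F_2$ of a vertex in $S$ lies in $N(S)$'' ignores neighbours \emph{inside} $S$. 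The same defect kills the variant $F_2=N(S)\cup\{u_1\}$, the ``cleaner alternative'' built from $N(C)$, and your last paragraph, where again $v\in N(C)$. Your count is also off by one: $v$ is itself one of the $4(n-2)$ neighbours of $C$, so with two overlaps $|N(S)|=5n-12$, matching Lemma \ref{A1}(1). A sanity check would have caught the problem: an indistinguishable cyclic pair of size at most $5n-11$ would contradict the lower bound $ct(UG_n)\ge 5n-10$ of Lemma \ref{>1}.

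The missing idea is that the unique vertex $v$ of $F_1\triangle F_2$ must satisfy $N_{UG_n}(v)\subseteq F_1\cap F_2$. Hence $v$ must be \emph{nonadjacent} to $C$: you need the disconnected configuration of Lemma \ref{A1}(2), not the connected one of Lemma \ref{A1}(1). To keep $|N(C\cup\{v\})|$ minimal, take $v$ at distance two from a cycle vertex $u$ with two common neighbours. The paper takes a path with edges $(12),(23),(34),(45)$ in $G(\tau)$, $C=\langle u,u(12),u(12)(34),u(34)\rangle$ and $v=u(23)(45)$. Then $N(v)\cap N(C)=\{u(23),u(45)\}$, $v\notin N(C)$ by bipartiteness and a position count, and $|N(C\cup\{v\})|=(4n-8)+(n-2)=5n-10$. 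Set $F_1=N(C\cup\{v\})$ and $F_2=F_1\cup\{v\}$. Both leave $C$ as a component together with a $4$-cycle elsewhere, they are indistinguishable because $N(v)\subseteq F_1\cap F_2$, and $|F_2|=5n-9$, which gives $ct(UG_n)\le 5n-10$.

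A smaller issue: your appeal to Lemma \ref{large component 1} with $p=6$ needs $5n-9\le 6n-21$, i.e.\ $n\ge 12$, so it does not cover the stated range $n\ge 5$. The paper instead exhibits an explicit $4$-cycle far from $S$, through $u(12)(23)(34)(45)$; that is the right way to verify cyclicity.
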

\setlength{\parindent}{2em}
\noindent{\bf Proof.} Note that \( G(\tau) \) is a triangle-free unicyclic graph with at least 5 vertices. It follows that \( G(\tau) \) contains a subgraph isomorphic to \( P_5 \) (not necessarily induced). Without loss of generality, we may assume the edge set of this \( P_5 \) is \( E(P_5) = \{(1,2), (2,3), (3,4), (4,5)\} \). Let \( u = 12345\cdots n \), \(w= (((u\cdot(12))\cdot(23))\cdot(34))\cdot(45)\), and \( v = (u\cdot(23))\cdot(45)\) (see Fig. \ref{fig:4circle}). We construct two 4-cycles as follows: 
\[
C_1 = \left \langle  u, u\cdot(12), u\cdot(12)\cdot(34), u\cdot(34) \right \rangle, \quad C_2 = \left \langle w, w\cdot(12), w\cdot(12)\cdot(34), w\cdot(34)\right \rangle.
\]

Then \( N_{UG_n}(v) \cap N_{UG_n}(C_1) = \{u\cdot(23), u\cdot(45)\} \), and \( C_1 \), \( C_2 \), and \( v \) are pairwise non-adjacent. Note that \( C_1 \) and \( C_2 \) lie in distinct components of \( G - N_{UG_n}(C_1 \cup \{v\}) \). It follows that \( N_{UG_n}(C_1 \cup \{v\}) \) is a faulty cyclic subset of \( UG_n \). Moreover, the size of this neighborhood satisfies: 
\[
\begin{aligned}
|N_{UG_n}(C_1 \cup \{v\})| = 3(n-2) + (n-2-2) + (n-2) + 2= 5n - 10.
\end{aligned}
\]

\begin{figure}[!ht]
    \centering
    \includegraphics[width=0.8\linewidth]{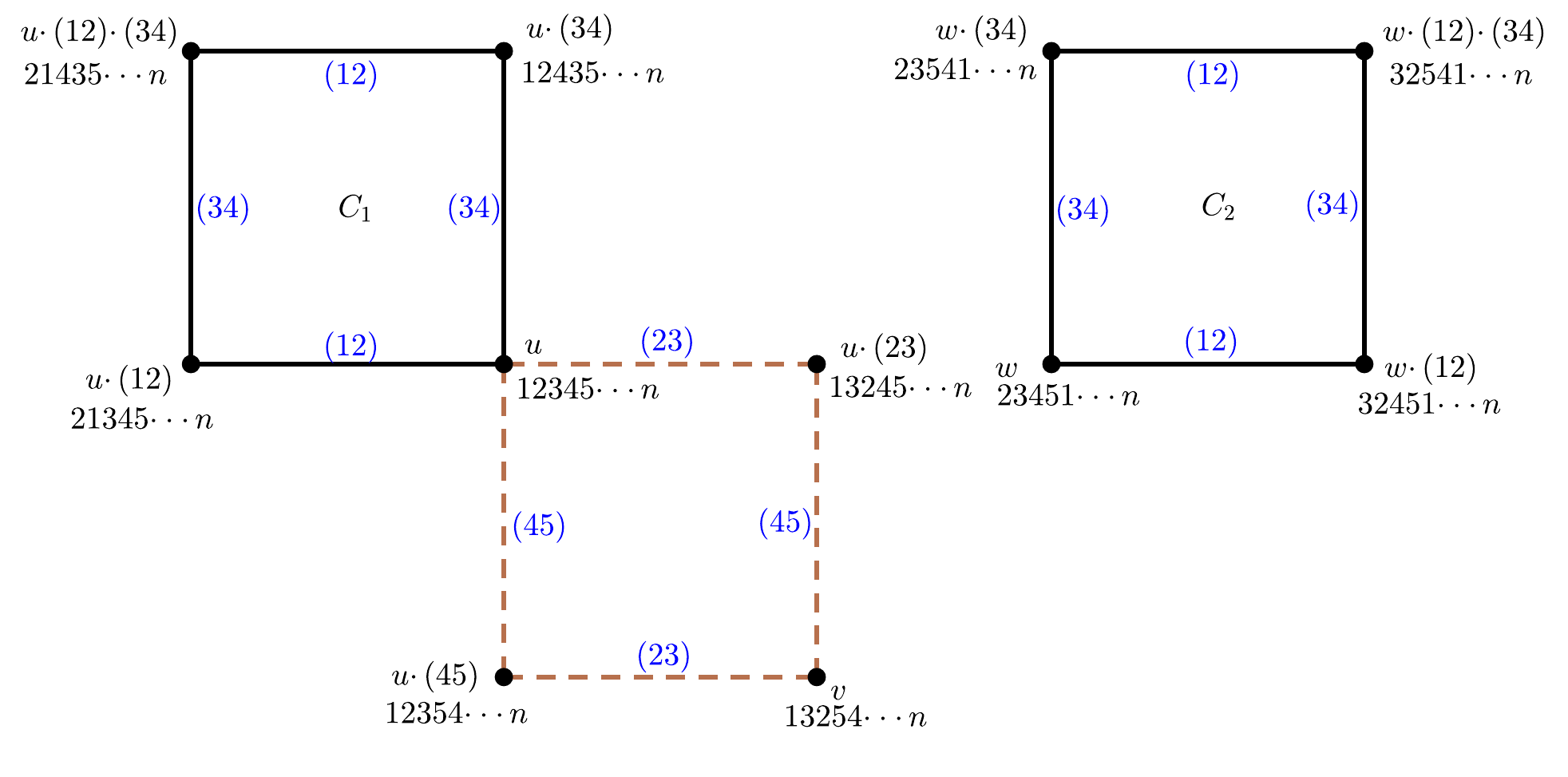}
    \caption{Illustration of two distinct cycles $C_1, C_2$ and the vertex $v$ of Lemma \ref{lemma3.3}}
    \label{fig:4circle}
\end{figure}

Let \( F_1 = N_{UG_n}(C_1 \cup \{v\}) \) and \( F_2 = F_1 \cup \{v\} \). Clearly, \( F_1 \) and \( F_2 \) are both faulty cyclic vertex subsets of \( UG_n \), with \( F_1 \Delta F_2 = \{v\} \) and \( N_{UG_n}(v) \subset N_{UG_n}(C_1 \cup \{v\}) = F_1 \cap F_2 \). Consequently, there are no edges between \( F_1 \Delta F_2 \) and \( \overline{F_1 \cup F_2} \). By Lemmas \ref{L2.1} and \ref{L2.2}, \( F_1 \) and \( F_2 \) are indistinguishable under both the PMC and MM$^*$ models. Therefore, under these models, we have  
\[
ct(UG_n) \leq \max\{|F_1|, |F_2|\} - 1 = |F_2| - 1 = 5n - 10.    
\] \hfill$\Box$

\begin{lemma}\label{>1}
Let $UG_n$ be the Cayley graph generated by transposition triangle-free unicyclic graphs and $n \geq 11$. Then $ct(UG_n) \geq 5n - 10$ under both the PMC model and the MM$^*$ model.
\end{lemma}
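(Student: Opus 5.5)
We argue by contradiction. Suppose there are two distinct faulty cyclic subsets $F_1,F_2\subseteq V(UG_n)$ with $|F_1|,|F_2|\le 5n-10$ that are indistinguishable under the PMC model (respectively the MM$^*$ model). Since $F_1\ne F_2$, after relabeling we may assume $F_2\setminus F_1\neq\emptyset$. Put $F=F_1\cap F_2$ and $R=V(UG_n)\setminus(F_1\cup F_2)$.

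\textbf{Step 1: $R\neq\emptyset$ and $F_1\triangle F_2$ is separated from $R$.} We have $|F_1\cup F_2|\le 10n-20<n!$, so $R\neq\emptyset$. By Lemma \ref{L2.1}, indistinguishability under PMC means there is no edge between $R$ and $F_1\triangle F_2$. Under MM$^*$ we obtain the same conclusion from Lemma \ref{L2.2}(1), as follows. Suppose $x\in R$ is adjacent to some $z\in F_1\triangle F_2$, and say $z\in F_2\setminus F_1$. Condition (2) forces every other neighbour of $x$ to lie outside $F_2\setminus F_1$. Condition (1) forces $x$ to have no neighbour in $R$. So $x$ is isolated in $G[R]$ and has exactly one neighbour in $F_2\setminus F_1$. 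We then handle such isolated vertices by a counting argument: they all lie in small components of $UG_n-F_1$ or of $UG_n-F_2$, and there are only boundedly many of them. We will check this as in the conditional-diagnosability proofs.

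\textbf{Step 2: $F_2\setminus F_1$ lies entirely in small components of $UG_n-F$.} From Step 1, $N(F_2\setminus F_1)\cap R=\emptyset$, so $F$ separates $F_2\setminus F_1$ from $R$. We need $|F|\le 5n-11$, which holds because $F\subsetneq F_2$. We apply Lemma \ref{large component 1} with $p=6$: this gives $6n-21\ge 5n-10$ for $n\ge 11$, which is exactly where the hypothesis $n\ge 11$ enters. Hence $UG_n-F$ has a large component, and its small components contain at most $5$ vertices in total. In the PMC case the large component lies in $R$, since otherwise $R$ would sit in small components, which is impossible because $|R|$ is huge. Therefore $(F_2\setminus F_1)\cup(F_1\setminus F_2)$ is contained in the small components, and $|F_1\triangle F_2|\le 5$.

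\textbf{Step 3 (the main obstacle): use cyclicity to contradict the size bound.} Since $F_2$ is cyclic, $UG_n-F_2$ has two components containing cycles. By Lemma \ref{large component 1} applied to $F_2$, at most one of them can be the large component. So some cyclic component $H$ of $UG_n-F_2$ has at most $5$ vertices. Because the girth is $4$ (Lemma \ref{girth}), $H$ contains a $4$-cycle. Let $S=V(H)\cup$(the small part of $F_1\triangle F_2$ it touches). We show that $F_2\supseteq N(H')$ for a set $H'$ of the form considered in Lemma \ref{A1}, of size $4$ or $5$ and containing a $4$-cycle.

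The cases are as follows. If $|H'|=4$, it is a $4$-cycle, and it has $4n-8$ neighbours. Together with the vertices of $F_2\setminus F_1$, or with the neighbours of $F_1\setminus F_2$ inside $F$, we account for at least $n-2$ further vertices of $F$. This uses Lemma \ref{neighbor}(2) and Lemma \ref{H}, the latter limiting overlaps to $2$. We get $|F_1|$ or $|F_2|\ge 5n-9$, a contradiction.

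If $|H'|=5$, Lemma \ref{A1} gives $|N(S)|\ge 5n-12$ in the connected case and $\ge 5n-10$ in the disconnected case. Adding the vertices of $F_1\triangle F_2$ that must lie in $F_1$ or $F_2$ beyond $N(S)$ pushes one of $|F_1|,|F_2|$ above $5n-10$. The delicate point is the connected case of Lemma \ref{A1}, where we only have $5n-12$. There we must exhibit two extra vertices, using the fact that the vertices of $F_2\setminus F_1$ (nonempty) are themselves separated from $R$ and therefore have their $n$ neighbours in $F\cup$small parts. We expect this bookkeeping of overlaps to be the hardest step. We would organize it by the sizes $|F_1\triangle F_2|\in\{1,\dots,5\}$, and in the MM$^*$ case additionally use Lemma \ref{L2.2}(2),(3) to rule out a vertex of $F$ having two neighbours in $F_2\setminus F_1$.
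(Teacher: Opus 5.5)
Your outline has the right ingredients (Lemma~\ref{large component 1} with $p=6$ applied to $F=F_1\cap F_2$, a small cyclic component of $UG_n-F_2$, and Lemma~\ref{A1}). There is a genuine gap, though. Step~3, where the contradiction has to be produced, is not carried out, and the route you propose for its delicate case does not work.

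The observation you are missing is that Step~2 already pins everything down. The small cyclic component $H$ of $UG_n-F_2$ also lies in the small part of $UG_n-F$. Indeed, its vertices are in $R\cup(F_1\setminus F_2)$, and in $UG_n-F$ its only outside neighbours are in $F_2\setminus F_1$, which lies in the small part. Moreover, $V(H)$ is disjoint from $F_2\setminus F_1\neq\emptyset$. Hence $4\le |V(H)|\le 5-|F_2\setminus F_1|\le 4$. So $H$ is a $4$-cycle, $F_2\setminus F_1=\{z\}$, and $F_1\setminus F_2\subseteq V(H)$. The set $S=V(H)\cup\{z\}$ is then the whole small part, so $N(S)\subseteq F$. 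No case split on $|F_1\triangle F_2|$ and no auxiliary set $H'$ is needed.
\begin{itemize}
\item If $z$ has no neighbour on $H$, Lemma~\ref{A1}(2) gives $|F_2|\ge 1+(5n-10)$.
\item If $z$ is adjacent to $H$, indistinguishability forces $V(H)\cap R=\emptyset$. An $R$-vertex of $H$ adjacent to $z$ or to $F_1\setminus F_2$ gives a distinguishing test; under MM$^*$, use its two neighbours on the $4$-cycle together with conditions (1) and (3) of Lemma~\ref{L2.2}. Hence $F_1\setminus F_2=V(H)$ and $|F_1|\ge 4+(5n-12)=5n-8$.
\end{itemize}
So the missing weight comes from $F_1\setminus F_2$ being counted in $|F_1|$, not from $F$. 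Your plan to find two more vertices of $F$ among the neighbours of $F_2\setminus F_1$ cannot succeed in general. Those neighbours are already counted in the bound $5n-12$ of Lemma~\ref{A1}(1), and that bound is attained, for instance when $G(\tau)$ is a cycle and $H$ and the pendant vertex use three pairwise disjoint transpositions. Thus $|F|$ itself need not reach $5n-10$.

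Step~1 is also incomplete for MM$^*$. You only promise a counting argument, and the statement you aim at (the exceptional $R$-vertices lie in small components of $UG_n-F_1$ or of $UG_n-F_2$) is not what Step~2 uses. Step~2 needs that the large component $L$ of $UG_n-F$ avoids $F_1\triangle F_2$. This does hold:
\begin{itemize}
\item By Lemma~\ref{L2.2}, an $R$-vertex adjacent to $F_1\triangle F_2$ has no $R$-neighbour, and at most one neighbour in each of $F_1\setminus F_2$ and $F_2\setminus F_1$. Hence it has at least $n-2$ neighbours in $F$.
\item The remaining $R$-vertices of $L$ therefore form a set closed under adjacency in $L$.
\item Consequently, either $L\subseteq R$, or $L$ consists of $F_1\triangle F_2$ together with such exceptional vertices. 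There are only $O(n)$ of the latter, contradicting $|L|\ge n!-5n+6$.
\end{itemize}
The paper organizes this differently, through the large component $B$ of $UG_n-F_2$. Claim~1 shows $V(B)\cap(F_1\setminus F_2)=\emptyset$. For MM$^*$, this is because otherwise $V(B)\cap R$ would be an independent set of vertices whose neighbours all lie in $F_1\cup F_2$, and such a set has size at most $|F_1\cup F_2|$ by the perfect matching. Then $N(F_2\setminus F_1)\cap V(B)=\emptyset$ follows from condition (1), since every vertex of $B\subseteq R$ has a neighbour in $B$. With either version written out, plus the pinning-down argument above, your proof would go through along essentially the paper's lines.
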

\setlength{\parindent}{2em}
\noindent{\bf Proof.}
By Definition \ref{D2.4}, we only need to show that $UG_n$ is $(5n - 10)$-diagnosable under the constraint of faulty cyclic sets. This is equivalent to proving that $F_1$ and $F_2$ are distinguishable for any two faulty cyclic subsets $F_1$ and $F_2$ of $UG_n$ with $\max\{|F_1|, |F_2|\} \leq 5n - 10$.

On the contrary,  suppose that $F_1$ and $F_2$ are indistinguishable. Without loss of generality, assume $F_2\setminus F_1 \neq \emptyset$. Note that 
\begin{align*}
|\overline{F_1\cup F_2}|= |V(UG_n)| - |F_1\cup F_2|\geq |V(UG_n)| - |F_1| - |F_2|\geq n! - 2(5n-10) > 0. 
\end{align*}
It follows that $\overline{F_1\cup F_2} \neq \emptyset$. Note that \( |F_2| \leq 5n - 10 \leq 6n - 21 \) for \( n \geq 11 \). By Lemma \ref{large component 1}, \( UG_n - F_2 \) contains a large component \( B \) and several small components, where the total number of vertices in these small components is at most 5. Let \( M = UG_n - F_2 - B \). As \( F_2 \) is a faulty cyclic subset, both \( B \) and \( M \) contain cycles. By Lemma \ref{girth}, we have \( 4 \leq |V(M)| \leq 5 \). Thus,  
\[
|V(B)| = |V(UG_n)| - |F_2| - |M| \geq n! - (5n - 10) - 5 = n! - 5n + 5.
\]  
Furthermore,  
\[
|V(B)| - |F_1| \geq n! - 5n + 5 - (5n - 10) = n! - 10n + 15 > 0 \tag{$*$}
\]  
implying \( V(B) \setminus (F_1 \setminus F_2) \neq \emptyset \). To precisely characterize the structural relationship between \( B \) and \( F_1 \setminus F_2 \), we state the following claim:

\noindent \textbf{Claim 1.} 
$V(B)\cap(F_1\setminus F_2) = \emptyset$, i.e., $F_1\setminus F_2 \subset V(M)$. 

\noindent {\bf Proof of Claim 1.} First, consider the PMC model. On the contrary, suppose there exists a vertex \( u \in V(B) \cap (F_1 \setminus F_2) \). Since \( B \) is a large component and \( |V(B)| - |F_1| \stackrel{(*)}{>} 0 \), there exists a vertex \( v \in V(B) \setminus (F_1 \setminus F_2) \) such that \( (u, v) \in E(UG_n) \). Note that \( u \in V(B) \cap (F_1 \setminus F_2) \subseteq F_1 \Delta F_2 \), and \( v \in V(B) \setminus (F_1 \setminus F_2) = V(B) \setminus (F_1 \cup F_2) \subset \overline{F_1 \cup F_2} \). By Lemma \ref{L2.1}, \( F_1 \) and \( F_2 \) are distinguishable under the PMC model, contradicting the assumption that $F_1$ and $F_2$ are indistinguishable. Thus, $V(B)\cap(F_1\setminus F_2) = \emptyset$ under the PMC model. 


Next, consider the MM$^*$ model. On the contrary, assume \( V(B) \cap (F_1 \setminus F_2) \neq \emptyset \). Since \( B \) is a component, there exist vertices \( x \in V(B) \cap (F_1 \setminus F_2) \) and \( y \in V(B) \setminus (F_1 \cup F_2) \) such that \( (x, y) \in E(UG_n) \). Define the subset \( A \subseteq B \) where for every vertex \( a \in A \), there exists a vertex \( b \in V(B) \cap (F_1 \setminus F_2) \) with \( (a, b) \in E(UG_n) \)(see Fig. \ref{fig:Lemma3.4-1}). 
If \([ V(B) \setminus (F_1 \cup F_2) ]\setminus A \neq \emptyset \), the connectivity of \( B \) ensures an edge between \([V(B) \setminus (F_1 \cup F_2)] \setminus A \) and \( A \). By Lemma \ref{L2.2}, \( F_1 \) and \( F_2 \) are distinguishable under the MM$^*$ model. This contradicts the assumption that \( F_1 \) and \( F_2 \) are indistinguishable under MM$^*$ model. Thus, we  can assume that \([ V(B) \setminus (F_1 \cup F_2)] \setminus A = \emptyset \). It follows that \( A = V(B) \setminus (F_1 \cup F_2) \). Because \( F_1 \) and \( F_2 \) are indistinguishable under MM$^*$ model, \( A = V(B) \setminus (F_1 \cup F_2) \) is an independent set. It follows that 
\[
|V(B)\setminus(F_1\setminus F_2)| = |V(B)\setminus(F_1\cup F_2)| \leq odd(UG_n-(F_1\cup F_2))\]
\begin{figure}[!ht]
    \centering
    \includegraphics[width=0.5\linewidth]{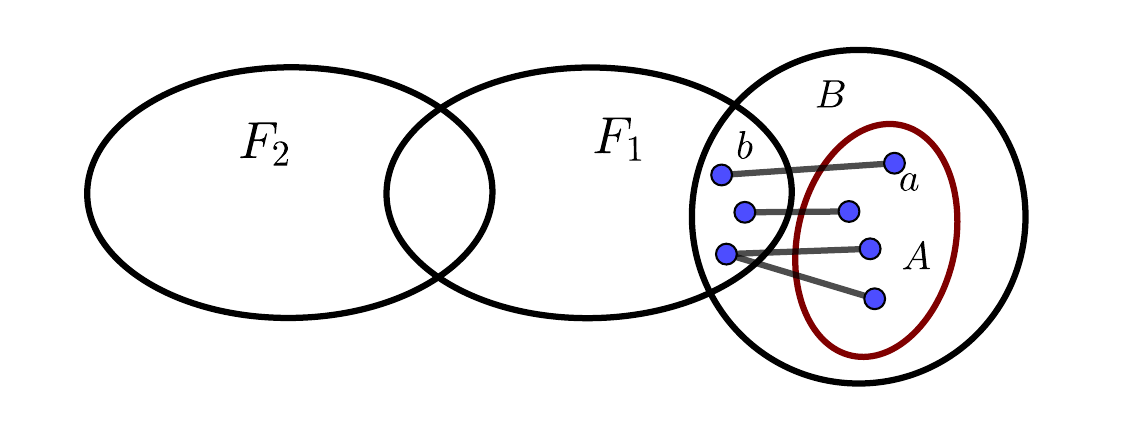}
    \caption{Illustration of the proof of claim 1 under MM$^*$ model in Lemma \ref{>1}}
    \label{fig:Lemma3.4-1}
\end{figure}
By Lemma \ref{bipartite} and \ref{matching}, $UG_n$ has a perfect matching. By Theorem \ref{odd}, we have 
\[
 odd(UG_n-(F_1\cup F_2)) \leq |F_1\cup F_2| \leq 10n-20.
\]
Thus, we have
\[
|V(B)| \leq |V(B)\setminus(F_1\setminus F_2)| + |F_1\setminus F_2| \leq 10n-20 + |F_1| \leq 15n-30,
\]
which contradicts $|V(B)| \geq n! -5n +5$. Hence $V(B)\cap(F_1\setminus F_2) = \emptyset$.      $\qed$

\begin{figure}[!ht]
    \centering
    \includegraphics[width=0.4\linewidth]{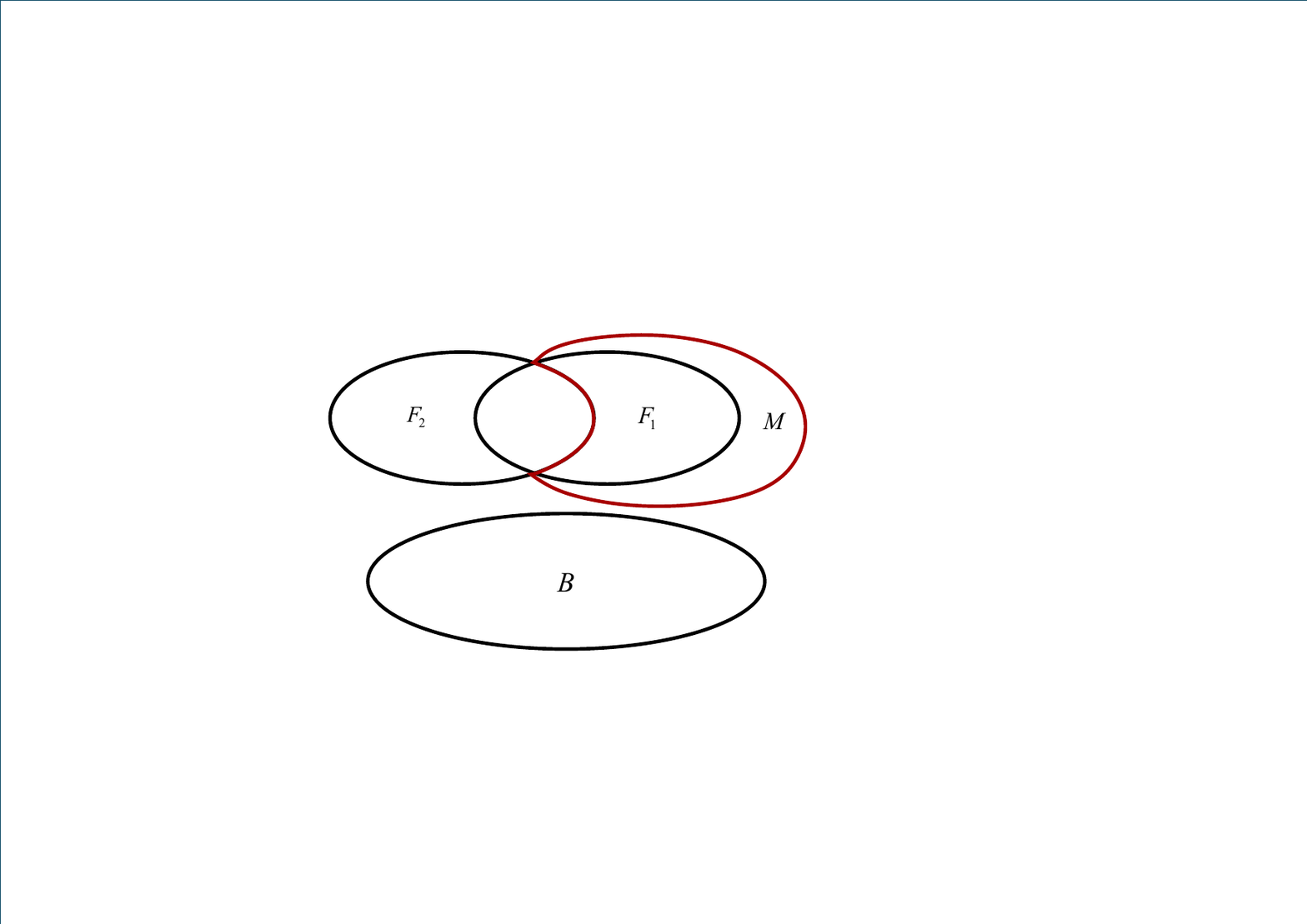}
    \caption{Illustration of structure of $UG_n$ in Lemma \ref{>1}}
    \label{fig:1}
\end{figure}

From Claim 1, we have \( F_1 \setminus F_2 \subset V(M) \) (see Fig. \ref{fig:1}) and thus \( |F_1 \setminus F_2| \leq |M| \leq 5 \). By symmetry, \( 1 \leq |F_2 \setminus F_1| \leq 5 \). Since \( F_1 \) and \( F_2 \) are indistinguishable and \( B \) is a large component, Lemma \ref{L2.1} and \ref{L2.2} imply \( N_{UG_n}(F_2 \setminus F_1) \cap V(B) = \emptyset \). This implies \( UG_n - (F_1 \cap F_2) \) decomposes into three subgraphs: \( B \), \( M \), and \( UG_n[F_2 \setminus F_1] \), where both \( B \) and \( M \) contain cycles. Since  
\[
|V(B)| - |V(M)| - |F_2 \setminus F_1| \geq n! - 5n + 5 - 5 - 5 = n! - 5n - 5 > 0,
\]  
\( B \) remains a large component in \( UG_n - (F_1 \cap F_2) \). Moreover, \( N_{UG_n}(V(M) \cup (F_2 \setminus F_1)) \subset F_1 \cap F_2 \), so \( |N_{UG_n}(V(M) \cup (F_2 \setminus F_1))| \leq |F_1 \cap F_2| \). To further bound the size of \( V(M) \cup (F_2 \setminus F_1) \), we state the following claim: 

{\bf \noindent Claim 2: }\( |V(M) \cup (F_2 \setminus F_1)| \geq 6 \). 
 
{\noindent \bf Proof of Claim 2.} On the contrary, suppose \( |V(M) \cup (F_2 \setminus F_1)| < 6 \). Note that \( 4 \leq |V(M)| \leq 5 \) and \( F_2 \setminus F_1 \neq \emptyset \), so \( |V(M) \cup (F_2 \setminus F_1)| = 5 \). Since \( |F_2 \setminus F_1| \geq 1 \) and \( M \) contains a cycle, Lemma \ref{girth} implies  \( M \) is a 4-cycle and  \( |F_2 \setminus F_1| = 1 \).

\begin{itemize}
    \item Suppose \( F_1 \setminus F_2 \neq \emptyset \). Since \( F_1 \) and \( F_2 \) are indistinguishable and \( M \) is a 4-cycle, we have \( F_1 \setminus F_2 = V(M) \). Recall that \( N_{UG_n}(V(M) \cup (F_2 \setminus F_1)) \subset F_1 \cap F_2 \). It follows that $$ |F_1 \cap F_2| \geq |N_{UG_n}(V(M) \cup (F_2 \setminus F_1))| \geq 5n - 12,$$ where the last inequality is by Lemma \ref{A1}. Thus,  
\[
|F_1| = |F_1 \setminus F_2| + |F_1 \cap F_2| = |M| + |F_1 \cap F_2| \geq 5n - 8,
\]  
contradicting \( |F_1| \leq 5n - 10 \).
\item Suppose \( F_1 \setminus F_2 = \emptyset \), so \( F_1 \subset F_2 \). Since \( F_1 \) and \( F_2 \) are indistinguishable, we have \( N_{UG_n}(F_2 \setminus F_1) \cap V(M) = \emptyset \). By Lemma \ref{A1}, \( |F_1 \cap F_2| \geq |N_{UG_n}(V(M) \cup (F_2 \setminus F_1))| \geq 5n - 10 \). Thus,  
\[
|F_2| = |F_2 \setminus F_1| + |F_2 \cap F_1| \geq 1 + 5n - 10 = 5n - 9,
\]  
contradicting \( |F_2| \leq 5n - 10 \).      $\qed$
\end{itemize}

Next, we complete the proof of this lemma. By Claim 2, $|V(M)\cup(F_2\setminus F_1)| \geq6$. By Lemma \ref{large component 1}, $|F_1\cap F_2| \geq6n-20$, so we have
\[
|F_2| \geq |F_2\setminus F_1| + |F_1\cap F_2| \geq6n-20 >5n-10,
\]
a contradiction to the assumption.

In conclusion, $ct(UG_n) \geq5n - 10$ under both the PMC model and the MM$^*$ model when $n\geq 11$.
\hfill$\Box$\\

Combining with Lemma \ref{lemma3.3} and \ref{>1}, we have the following result.

\begin{theorem}\label{main1}
Let $UG_n$ be the Cayley graph generated by transposition triangle-free unicyclic graphs and $n \geq 11$. Then $ct(UG_n)=5n - 10$ under both the PMC model and the MM$^*$ model.
\end{theorem}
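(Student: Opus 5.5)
The theorem follows by combining the two bounds already established, so the plan is to invoke them directly and check that their hypotheses on $n$ are met. First, since $n\ge 11\ge 5$, Lemma \ref{lemma3.3} applies and gives $ct(UG_n)\le 5n-10$ under both the PMC and MM$^*$ models.

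The construction behind that lemma is as follows. Take a $P_5$ in $G(\tau)$ and the 4-cycle $C_1$ through $u=12\cdots n$. Choose the vertex $v=u\cdot(23)\cdot(45)$, which is non-adjacent to $C_1$ but shares exactly two neighbours with $N_{UG_n}(C_1)$. Setting $F_1=N_{UG_n}(C_1\cup\{v\})$ and $F_2=F_1\cup\{v\}$ gives two faulty cyclic sets of sizes $5n-10$ and $5n-9$. The vertex $v$ is isolated from $\overline{F_1\cup F_2}$, so by Lemmas \ref{L2.1} and \ref{L2.2} these two sets are indistinguishable.

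Second, since $n\ge 11$, Lemma \ref{>1} gives $ct(UG_n)\ge 5n-10$ under both models. Its argument assumes two indistinguishable faulty cyclic sets of size at most $5n-10$. Lemma \ref{large component 1} with $p=6$ then yields a large component $B$ and a small cyclic part $M$ with $4\le |V(M)|\le 5$. Claim 1 forces $F_1\triangle F_2$ away from $B$. Lemma \ref{A1} then rules out the case $|V(M)\cup(F_2\setminus F_1)|=5$. The remaining case, where this union has size at least $6$, forces $|F_1\cap F_2|\ge 6n-20>5n-10$, again by Lemma \ref{large component 1}.

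Combining the two inequalities gives $ct(UG_n)=5n-10$ for both models. Neither step is a real obstacle here, since both lemmas are already proved. The only point to verify is that the condition $n\ge 11$ covers both the hypothesis $n\ge5$ of Lemma \ref{lemma3.3} and the requirement $5n-10\le 6n-21$ used in Lemma \ref{>1}.
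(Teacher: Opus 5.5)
Your proposal is correct and matches the paper's own argument: the theorem is obtained by combining the upper bound $ct(UG_n)\le 5n-10$ from Lemma \ref{lemma3.3} (valid for $n\ge 5$) with the lower bound $ct(UG_n)\ge 5n-10$ from Lemma \ref{>1} (valid for $n\ge 11$, which guarantees $5n-10\le 6n-21$ so that Lemma \ref{large component 1} applies with $p=6$). Your sketches of both lemmas reproduce the paper's construction ($F_1=N_{UG_n}(C_1\cup\{v\})$, $F_2=F_1\cup\{v\}$) and its case analysis (Claim 1, Lemma \ref{A1} for the size-$5$ case, and Lemma \ref{large component 1} for the size-$\ge 6$ case).
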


\section{Conclusion}

In this paper, we establish that under the PMC and MM$^*$ models, the cyclic diagnosability of the Cayley graph generated by a triangle-free unicyclic graph is $5n-10$ for $n \geq 11$. In the future, we will conduct in-depth research on the Cayley graphs generated by transposition unicyclic graphs with a triangle.


\bibliographystyle{abbrv}
\bibliography{reference}

\end{document}